\documentclass[11pt,reqno,a4paper]{amsart}

\usepackage[T1]{fontenc}
\usepackage[utf8]{inputenc}
\usepackage{lmodern}
\usepackage[margin=25mm]{geometry}

\usepackage{mathtools}
\usepackage{amssymb} 
\usepackage{enumitem}
\setlist[enumerate,1]{label=\textup{(\roman*)},ref=\roman*}

\usepackage[hidelinks]{hyperref}

\theoremstyle{plain}
\newtheorem{theorem}{Theorem}
\newtheorem{lemma}[theorem]{Lemma}

\newtheorem{corollary}[theorem]{Corollary}

\theoremstyle{definition}

\theoremstyle{remark}

\numberwithin{equation}{section}

\DeclareMathOperator{\Id}{Id}

\title[A Vector-Valued Extension of a Theorem of Marciszewski]{A Vector-Valued Extension of a Theorem of Marciszewski}
\author[L. Candido]{Leandro Candido}
\address{Universidade Federal de S\~ao Paulo -- UNIFESP,
Instituto de Ci\^encia e Tecnologia, Departamento de Matem\'atica,
S\~ao Jos\'e dos Campos, SP, Brazil}
\email{leandro.candido@unifesp.br}

\date{} 

\keywords{$C(K)$ spaces, vector-valued continuous functions,
linear embeddings, embedding distortion, $c_0(\Gamma)$ spaces}

\subjclass[2020]{Primary 46E15; Secondary 46E40, 46B03}

\begin{document}

\begin{abstract}
We establish a vector-valued extension of a theorem of Marciszewski.
Let $K$ be a compact Hausdorff space, let $\Gamma$ be a set, and let
$X$ be a nonzero Banach space containing no isomorphic copy of $c_0$.
We prove that if $C(K)$ embeds isomorphically into $c_0(\Gamma,X)$,
then $C(K)$ is isomorphic to $c_0(\Lambda)$ for some set $\Lambda$.
Moreover, when $\Gamma$ is infinite, the optimal distortion of
embeddings of $C(K)$ into $c_0(\Gamma,X)$ equals that of embeddings
into $c_0(\Gamma)$. The classification result answers a question
posed in an earlier paper.
\end{abstract}
\maketitle

\section{Introduction}\label{sec:introduction}

Earlier work
\cite{CandidoGalegoHowFar,CandidoGalego,CandidoDistortion}
established lower bounds, in terms of the height of $K$,
for the distortion of isomorphic embeddings
\begin{equation}\label{Rel:Main}
C(K)\hookrightarrow c_0(\Gamma,X),
\end{equation}
where $K$ is a compact Hausdorff space, $\Gamma$ is a set,
and $X$ is a Banach space containing no isomorphic copy
of $c_0$. These bounds apply to both the scalar-valued
and the vector-valued settings.

In the separable case, these results yield a complete
classification. Indeed, if $C(K)$ is separable and
\eqref{Rel:Main} holds, then
\cite[Theorem~1.1]{CandidoDistortion} implies that $K$ is
scattered of finite height. If $K$ is infinite, the
Mazurkiewicz--Sierpi\'nski classification theorem
\cite[Theorem~8.6.10]{Se}, together with the theorem of
Bessaga and Pe{\l}czy\'nski \cite{BP}, yields
$C(K)\sim c_0$. If $K$ is finite, then
$C(K)$ is isometric to $c_0(|K|)$.
Thus, in the separable setting, every space $C(K)$
admitting \eqref{Rel:Main} is isomorphic to a scalar-valued
$c_0$ space.

The assumption that $X$ contains no isomorphic copy of $c_0$
is essential, since $C([0,1])$ embeds isometrically into
$c_0(\mathbb N,C([0,1]))$ but is not isomorphic to
$c_0(\Lambda)$ for any set $\Lambda$.

In the scalar-valued setting, Marciszewski
\cite[Theorem~1.1]{Marciszewski} proved that, for a compact
Hausdorff space $K$, the following assertions are equivalent:
\begin{enumerate}
\item $C(K)$ is isomorphic to $c_0(\Lambda)$ for some set
      $\Lambda$;
\item $C(K)$ is isomorphic to a subspace of $c_0(\Gamma)$ for some
      set $\Gamma$;
\item $C_p(K)$ is linearly homeomorphic to $C_p(A(\kappa))$ for
      some cardinal $\kappa$.
\end{enumerate}
Here $C_p(K)$ denotes $C(K)$ endowed with the topology of pointwise
convergence, and $A(\kappa)$ is the one-point compactification of a
discrete space of cardinality $\kappa$.

Motivated by the separable case, we asked in
\cite[Problem~1.3]{CandidoDistortion} whether Marciszewski's
characterization extends to the vector-valued setting:
if $X$ contains no isomorphic copy of $c_0$, does
\eqref{Rel:Main} imply that $C(K)\sim c_0(\Lambda)$
for some set $\Lambda$?

Related questions concerning embedding distortion and
Banach--Mazur distances have been investigated in
\cite{RondosBoundaries,RondosSomaglia,RondosCardinal,JaponMalecPiasecki}.
The classification question above has, to the best of our knowledge,
remained open.

In this note, we answer this question affirmatively.
For every nonzero Banach space $X$ containing no isomorphic
copy of $c_0$, the condition
\[
C(K)\hookrightarrow c_0(\Gamma,X)
\quad\text{for some set }\Gamma
\]
is equivalent to the conditions in Marciszewski's theorem.
Moreover, whenever $\Gamma$ is infinite,
\[
d_{\mathrm{emb}}\bigl(C(K),c_0(\Gamma,X)\bigr)
=
d_{\mathrm{emb}}\bigl(C(K),c_0(\Gamma)\bigr),
\]
where $d_{\mathrm{emb}}$ denotes the optimal embedding
distortion, defined below.
Thus the vector-valued factor $X$ neither enlarges the class
of spaces $C(K)$ admitting such embeddings nor improves
their optimal distortion. The proof uses compactness of
the coordinate operators to reduce the problem to
scalar-valued embeddings.

\section{Preliminaries}\label{sec:preliminaries}

Throughout this note, all Banach spaces are real.
For a Banach space $E$, we denote its closed unit ball
by $B_E$ and its unit sphere by $S_E$.

We write $E\hookrightarrow F$ if there exists an isomorphic
embedding of $E$ into $F$, and $E\sim F$ if these spaces
are isomorphic. The distortion of an isomorphic embedding
$T:E\to F$ is $\|T\|\|T^{-1}\|$, where $T^{-1}$ is defined
on $T(E)$. We define the optimal embedding distortion by
\[
d_{\mathrm{emb}}(E,F)
=
\inf\left\{
\|T\|\|T^{-1}\|:
T:E\to F \text{ is an isomorphic embedding}
\right\},
\]
with the convention that $\inf\varnothing=+\infty$.

For a compact Hausdorff space $K$, we denote by $C(K)$
the Banach space of continuous real-valued functions on $K$,
endowed with the supremum norm.

For a set $\Gamma$ and a Banach space $X$, the space
$c_0(\Gamma,X)$ consists of all families
$(x_\gamma)_{\gamma\in\Gamma}$ such that
$\{\gamma\in\Gamma:\|x_\gamma\|\geq\varepsilon\}$ is finite
for every $\varepsilon>0$, endowed with the norm $\|(x_\gamma)_{\gamma\in\Gamma}\|
=\sup_{\gamma\in\Gamma}\|x_\gamma\|$. We write $c_0(\Gamma)$ when $X=\mathbb R$.

For a bounded operator $T:E\to c_0(\Gamma,X)$, we call
$T_\gamma:E\to X$, defined by
\[
T_\gamma x=(Tx)_\gamma,
\]
the $\gamma$-th coordinate operator.

For further notation and terminology, we refer to
\cite{CandidoDistortion}.

\section{Main results}\label{sec:main-results}

We first recall the following consequence of
Pe{\l}czy\'nski's theorem
\cite[Theorem~7.6]{Biorthog} (see also \cite{PelczynskiV}).

\begin{lemma}\label{Lem:CompactCoordinates}
Let $K$ be a scattered compact Hausdorff space, and let $X$ be a
Banach space containing no isomorphic copy of $c_0$.
Then every bounded operator $T:C(K)\to X$ is compact.
\end{lemma}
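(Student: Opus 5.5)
The plan is to combine Pe{\l}czy\'nski's characterization of unconditionally converging operators on $C(K)$ spaces with the structure of dual spaces of $C(K)$ for scattered $K$. Recall Pe{\l}czy\'nski's theorem \cite[Theorem~7.6]{Biorthog}: an operator $T:C(K)\to X$ is weakly compact if and only if it is unconditionally converging, and this holds if and only if $T$ fixes no copy of $c_0$. Since $X$ contains no isomorphic copy of $c_0$, no operator into $X$ can fix a copy of $c_0$. Indeed, if $T|_{E}$ were an isomorphism for some $E\sim c_0$, then $T(E)\subseteq X$ would be a copy of $c_0$. Hence every bounded $T:C(K)\to X$ is weakly compact, for arbitrary compact $K$.

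The second step upgrades weak compactness to compactness using scatteredness. Two routes are available, and I would use the one via the Dunford--Pettis property. The space $C(K)$ has the Dunford--Pettis property, so every weakly compact operator on $C(K)$ is completely continuous: it maps weakly null sequences to norm-null sequences. When $K$ is scattered, $C(K)$ contains no copy of $\ell_1$. This holds because $C(K)^*=\ell_1(K)$, which has the Schur property and is separable on countable subsets. More concretely, by Pe{\l}czy\'nski--Semadeni, $K$ is scattered if and only if $\ell_1\not\hookrightarrow C(K)$.

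By Rosenthal's $\ell_1$ theorem, every bounded sequence $(f_n)$ in $C(K)$ then has a weakly Cauchy subsequence $(f_{n_k})$. Complete continuity sends weakly Cauchy sequences to norm Cauchy sequences: if $T(f_{n_k})$ were not norm Cauchy, we could pick differences $f_{n_{k_j}}-f_{n_{l_j}}$ that are weakly null yet have images bounded away from zero, which is a contradiction. So $(Tf_{n_k})$ converges in norm, and $T$ is compact.

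The main point requiring care is this last upgrade, specifically the justification that $\ell_1\not\hookrightarrow C(K)$ for scattered $K$. I would cite the classical Pe{\l}czy\'nski--Semadeni theorem for this rather than reprove it. If a more self-contained argument is preferred, one can argue through the dual instead. By Grothendieck's characterization, a weakly compact $T$ has $T^*$ weakly compact, so $T^*(B_{X^*})$ is a relatively weakly compact subset of $C(K)^*=\ell_1(K)$. The Schur property of $\ell_1(K)$ makes weakly compact sets norm compact, since on $\ell_1$ weak and norm sequential compactness agree and Eberlein--\v{S}mulian applies. Thus $T^*$ is compact, and by Schauder's theorem so is $T$. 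This dual argument is in fact cleaner, and I would present it as the main proof, with the identification $C(K)^*=\ell_1(K)$ for scattered $K$ (Rudin) as the cited input.
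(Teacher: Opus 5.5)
Your proposal is correct and takes essentially the route the paper intends: the paper gives no separate argument, only citing Pe{\l}czy\'nski's theorem that every operator $C(K)\to X$ is weakly compact when $X$ contains no copy of $c_0$, with scatteredness (via $C(K)^*=\ell_1(K)$ and the Schur property, or equivalently the Dunford--Pettis property together with $\ell_1\not\hookrightarrow C(K)$) upgrading weak compactness to compactness. One small correction: the step from $T$ weakly compact to $T^*$ weakly compact is Gantmacher's theorem, not Grothendieck's.
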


\begin{lemma}\label{Lem:Scalarization}
Let $E$ be a nonzero Banach space, let $X$ be a Banach space,
and let $T:E\to c_0(\Gamma,X)$ be an isomorphic embedding.
Suppose that every coordinate operator $T_\gamma:E\to X$
is compact. Then, for every $0<\eta<1$, there exist a set
$\Lambda$ and an isomorphic embedding $R:E\to c_0(\Lambda)$
such that
\[
\|R\|\|R^{-1}\|
\leq \frac{\|T\|\|T^{-1}\|}{1-\eta}.
\]
Moreover, if $\Gamma$ is infinite, then $\Lambda$ can be
chosen with $|\Lambda|=|\Gamma|$.
\end{lemma}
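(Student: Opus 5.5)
Normalize $T$ so that $\|T^{-1}\|=1$, i.e.\ $\|x\|\le\|Tx\|=\sup_\gamma\|T_\gamma x\|\le\|T\|\|x\|$. The plan is to replace each compact coordinate operator $T_\gamma:E\to X$ by finitely many scalar functionals, losing only a factor $1-\eta$ in the lower estimate. Since $T_\gamma$ is compact, $T_\gamma^*:X^*\to E^*$ is compact as well, so $T_\gamma^*(B_{X^*})$ is relatively norm-compact in $E^*$. For each $\gamma$ we choose a finite $\eta$-net $\{x^*_{\gamma,1},\dots,x^*_{\gamma,n_\gamma}\}\subset B_{X^*}$, meaning that for every $x^*\in B_{X^*}$ some $i$ satisfies $\|T_\gamma^*x^*-T_\gamma^*x^*_{\gamma,i}\|\le\eta$. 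Put $\Lambda=\{(\gamma,i):\gamma\in\Gamma,\ 1\le i\le n_\gamma\}$ and define
\[
Rx=\bigl(x^*_{\gamma,i}(T_\gamma x)\bigr)_{(\gamma,i)\in\Lambda}.
\]

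We then carry out three checks. First, $Rx\in c_0(\Lambda)$. Given $\varepsilon>0$, only finitely many $\gamma$ have $\|T_\gamma x\|\ge\varepsilon$, each $n_\gamma$ is finite, and $|x^*_{\gamma,i}(T_\gamma x)|\le\|T_\gamma x\|$; hence only finitely many coordinates of $Rx$ have modulus at least $\varepsilon$. Second, the upper bound $\|R\|\le\sup_\gamma\|T_\gamma\|\le\|T\|$ holds directly. Third, for the lower bound, fix $x\in S_E$ and $\gamma$, and pick $x^*\in B_{X^*}$ with $x^*(T_\gamma x)=\|T_\gamma x\|$. Choosing $i$ from the net gives
\[
|x^*_{\gamma,i}(T_\gamma x)|\ge \|T_\gamma x\|-\|T_\gamma^*x^*-T_\gamma^*x^*_{\gamma,i}\|\,\|x\|\ge\|T_\gamma x\|-\eta.
\]
Taking the supremum over $\gamma$ yields $\|Rx\|\ge\|Tx\|-\eta\ge 1-\eta$. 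Hence $\|R^{-1}\|\le 1/(1-\eta)$, and $\|R\|\|R^{-1}\|\le\|T\|\|T^{-1}\|/(1-\eta)$. In this normalization $\eta$ is measured against $\|T^{-1}\|^{-1}=1$; in general we would take the net at scale $\eta/\|T^{-1}\|$.

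For the cardinality statement, suppose $\Gamma$ is infinite. Then $|\Lambda|\le|\Gamma|\cdot\aleph_0=|\Gamma|$. If $|\Lambda|<|\Gamma|$, we simply enlarge $\Lambda$ by $|\Gamma|$ dummy coordinates on which $R$ is zero. This changes neither norm and gives $|\Lambda|=|\Gamma|$. If $\Gamma$ is finite, no cardinality claim is required.

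The only delicate point is the net step. We must make sure to use compactness of the adjoint, via Schauder's theorem, rather than a net of the range. The range approach would also work, by picking norming functionals for finitely many points of $T_\gamma(B_E)$, but it needs a slightly more careful estimate. Everything else is routine.
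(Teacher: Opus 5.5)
Your proof is correct. It differs from the paper's only in how you produce, for each coordinate, finitely many functionals in $B_{X^*}$ that almost norm $T_\gamma$ on $B_E$.

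The paper works on the range side. It takes a finite $\delta$-net $\{y_{\gamma,1},\dots,y_{\gamma,m_\gamma}\}$ of $T_\gamma(B_E)$ in $X$, with $\delta=\eta/(2\|T^{-1}\|)$, and picks norming functionals $x^*_{\gamma,j}\in S_{X^*}$ for the net points. For $x\in B_E$ this gives $\|T_\gamma x\|\le\|y_{\gamma,j}\|+\delta\le|x^*_{\gamma,j}(T_\gamma x)|+2\delta$. That argument uses only the definition of compactness and Hahn--Banach, at the price of the factor $2$.

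You instead use Schauder's theorem to take a net of $T_\gamma^*(B_{X^*})$ in $E^*$. You then apply a single triangle inequality in $E^*$ to a functional norming $T_\gamma x$ itself. This reaches scale $\eta$ in one step, but it relies on a heavier tool than needed: Schauder's theorem, whose proof goes through Arzel\`a--Ascoli.

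Everything downstream is identical in the two routes: the index set $\Lambda$, the operator $R$, the bound $\|R\|\le\|T\|$, and the lower bound $\|Rx\|\ge(1-\eta)\|x\|/\|T^{-1}\|$. The range approach you set aside is exactly what the paper does, and it is no longer than yours.

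One small remark: your padding step for $|\Lambda|=|\Gamma|$ is harmless but unnecessary. Every $n_\gamma\ge 1$, so $|\Gamma|\le|\Lambda|\le|\Gamma|\cdot\aleph_0=|\Gamma|$ already holds when $\Gamma$ is infinite.
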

\begin{proof}
Fix $0<\eta<1$, and set
$a=1/\|T^{-1}\|$ and $\delta=\eta a/2$.
For each $\gamma\in\Gamma$, compactness of $T_\gamma$ yields a
finite $\delta$-net
$\{y_{\gamma,1},\ldots,y_{\gamma,m_\gamma}\}$
for $T_\gamma(B_E)$, with $m_\gamma\geq1$.
For each $1\leq j\leq m_\gamma$, if $y_{\gamma,j}\neq0$,
the Hahn--Banach theorem provides $x_{\gamma,j}^*\in S_{X^*}$
such that
\[
\bigl|x_{\gamma,j}^*(y_{\gamma,j})\bigr|
=
\|y_{\gamma,j}\|.
\]
If $y_{\gamma,j}=0$, take $x_{\gamma,j}^*=0$.

Given $x\in B_E$, choose $j$ such that
$\|T_\gamma x-y_{\gamma,j}\|<\delta$. Then
\[
\|T_\gamma x\|
\leq
\|y_{\gamma,j}\|+\delta
=
\bigl|x_{\gamma,j}^*(y_{\gamma,j})\bigr|+\delta
\leq
\bigl|x_{\gamma,j}^*(T_\gamma x)\bigr|+2\delta.
\]
Thus, for every $x\in E$,
\begin{equation}\label{Eq:FiniteNorming}
\|T_\gamma x\|
\leq
\max_{1\leq j\leq m_\gamma}
\bigl|x_{\gamma,j}^*(T_\gamma x)\bigr|
+
2\delta\|x\|.
\end{equation}

Set $\Lambda=\{(\gamma,j):\gamma\in\Gamma,\ 1\leq j\leq m_\gamma\}$, and define $R:E\to c_0(\Lambda)$ by the formula
\[
(Rx)(\gamma,j)=x_{\gamma,j}^*(T_\gamma x).
\]
Since $1\leq m_\gamma<\infty$ for every $\gamma\in\Gamma$, we have $|\Lambda|=|\Gamma|$ whenever $\Gamma$ is infinite. To check that $R$ is well defined, fix $x\in E$ and $\varepsilon>0$. Since $|(Rx)(\gamma,j)|\leq\|T_\gamma x\|$ and $Tx\in c_0(\Gamma,X)$, only finitely many coordinates $\gamma$ can satisfy $|(Rx)(\gamma,j)|\geq\varepsilon$ for some $j$. Each such coordinate contributes only finitely many pairs $(\gamma,j)$, so $Rx\in c_0(\Lambda)$. Moreover, $R$ is linear and $\|R\|\leq\|T\|$.

Taking the supremum over $\gamma$ in
\eqref{Eq:FiniteNorming}, we obtain, for every $x\in E$,
\[
a\|x\|
\leq
\|Tx\|
\leq
\|Rx\|+2\delta\|x\|
=
\|Rx\|+\eta a\|x\|.
\]
Hence $\|Rx\|\geq(1-\eta)a\|x\|$. Thus $R$ is an isomorphic embedding and
\[
\|R^{-1}\|
\leq
\frac{\|T^{-1}\|}{1-\eta}.
\]
Together with $\|R\|\leq\|T\|$, this proves the desired estimate.
\end{proof}

\begin{theorem}\label{Thm:VectorMarciszewski}
Let $K$ be a nonempty compact Hausdorff space, let $\Gamma$ be a set, and let
$X$ be a nonzero Banach space containing no isomorphic copy of $c_0$.
If $C(K)$ is isomorphic to a subspace of $c_0(\Gamma,X)$, then
there exists a set $\Lambda$ such that
\[
C(K)\sim c_0(\Lambda).
\]
Moreover, if $\Gamma$ is infinite, then
\[
d_{\mathrm{emb}}\bigl(C(K),c_0(\Gamma,X)\bigr)
=
d_{\mathrm{emb}}\bigl(C(K),c_0(\Gamma)\bigr).
\]
\end{theorem}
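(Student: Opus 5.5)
Given an isomorphic embedding $T:C(K)\to c_0(\Gamma,X)$, I would first show that $K$ is scattered, then combine Lemma~\ref{Lem:CompactCoordinates} with Lemma~\ref{Lem:Scalarization} to obtain a scalar-valued embedding, and finally invoke Marciszewski's theorem. For scatteredness, the simplest route is to note that if $K$ were not scattered, then $K$ would map continuously onto $[0,1]$. Hence $C([0,1])$, and therefore $\ell_1$, would embed isomorphically into $C(K)$ and so into $c_0(\Gamma,X)$. I would then rule out this embedding. Alternatively, I could cite the earlier results \cite{CandidoDistortion}, whose distortion bounds in terms of height already give that $K$ is scattered (indeed of finite height) whenever such an embedding exists. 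Since the paper refers to that work for terminology and explicitly says those bounds apply in the vector-valued setting, I will take scatteredness from \cite{CandidoDistortion}. This is the step I expect to be the main obstacle. If a self-contained argument is preferred, I would argue through separable subspaces. A copy of $C(\Delta)$ ($\Delta$ the Cantor set) inside $C(K)$ would embed into $c_0(\Gamma_0,X)$ for some countable $\Gamma_0$. The separable case of \cite[Theorem~1.1]{CandidoDistortion} then forces $\Delta$ to be scattered, which is a contradiction.

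Once $K$ is known to be scattered, Lemma~\ref{Lem:CompactCoordinates} shows that every coordinate operator $T_\gamma:C(K)\to X$ is compact, since $X$ contains no copy of $c_0$. Lemma~\ref{Lem:Scalarization}, applied with $E=C(K)$ (which is nonzero because $K\neq\varnothing$), then produces a set $\Lambda'$ and an isomorphic embedding $R:C(K)\to c_0(\Lambda')$. Marciszewski's theorem (ii)$\Rightarrow$(i) now yields a set $\Lambda$ with $C(K)\sim c_0(\Lambda)$.

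For the distortion equality with $\Gamma$ infinite, I would prove two inequalities. To get $\le$, fix $x_0\in S_X$ and $x_0^*\in S_{X^*}$ with $x_0^*(x_0)=1$. The map $J:c_0(\Gamma)\to c_0(\Gamma,X)$ given by $(t_\gamma)\mapsto(t_\gamma x_0)$ is an isometric embedding, so composing any embedding $C(K)\to c_0(\Gamma)$ with $J$ preserves its distortion. To get $\ge$, take any embedding $T:C(K)\to c_0(\Gamma,X)$. If none exists, both sides equal $+\infty$, because then by the previous inequality there is no embedding into $c_0(\Gamma)$ either. Otherwise $K$ is scattered as above and the coordinates of $T$ are compact. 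Lemma~\ref{Lem:Scalarization} then gives, for every $\eta\in(0,1)$, an embedding into $c_0(\Lambda)$ with $|\Lambda|=|\Gamma|$ and distortion at most $\|T\|\|T^{-1}\|/(1-\eta)$. Since $c_0(\Lambda)$ is isometric to $c_0(\Gamma)$, letting $\eta\to0$ and taking the infimum over $T$ gives the reverse inequality.
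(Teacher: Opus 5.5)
Your proof follows the paper's architecture. You get scatteredness of $K$, then compact coordinates via Lemma~\ref{Lem:CompactCoordinates}, then an embedding into a scalar $c_0$ space via Lemma~\ref{Lem:Scalarization}, then Marciszewski's theorem. For the distortion identity you use the same two inequalities: scalarization with $|\Lambda|=|\Gamma|$ plus reindexing, and the isometric embedding $J$. Your separate treatment of the case where no embedding exists is a harmless extra.

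The only real difference is how you obtain scatteredness. The paper simply quotes \cite[Theorem~1.5]{Candido2016}, which gives it directly for arbitrary $K$. You instead reduce to the separable case:
\begin{enumerate}
\item If $K$ is not scattered, a continuous surjection $K\to[0,1]$ places an isometric copy of $C([0,1])$, and hence of $C(\Delta)$ for the Cantor set $\Delta$, inside $C(K)$.
\item So $C(L)$ would embed into $c_0(\Gamma,X)$ for some metrizable non-scattered $L$.
\item This contradicts the separable consequence of \cite[Theorem~1.1]{CandidoDistortion} recorded in the introduction.
\end{enumerate}
This reduction is sound. You could apply it to $L=[0,1]$ directly, and the passage to a countable $\Gamma_0$ is unnecessary, since that separable statement is for an arbitrary set $\Gamma$. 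What it buys you is that the only external input on scatteredness is the separable theorem the paper already uses. The price is the Pe{\l}czy\'nski--Semadeni characterization of non-scattered compacta.

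Two caveats.
\begin{itemize}
\item Commit to this reduction rather than to the bare citation. The paper only attributes the scatteredness conclusion of \cite[Theorem~1.1]{CandidoDistortion} to the case where $C(K)$ is separable. Invoking it directly for arbitrary $K$ is not supported by anything available here.
\item Drop the first route through $\ell_1$; it cannot work. For $X=\ell_1$, which contains no copy of $c_0$, the space $\ell_1$ embeds isometrically into $c_0(\Gamma,X)$ through a single coordinate, so there is nothing to rule out. The obstruction has to be detected at the level of $C([0,1])$ or $C(\Delta)$.
\end{itemize}
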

\begin{proof}
Let $T:C(K)\to c_0(\Gamma,X)$ be an isomorphic embedding.
By \cite[Theorem 1.5]{Candido2016}, $K$ is scattered.
Hence Lemma~\ref{Lem:CompactCoordinates} implies that
every coordinate operator $T_\gamma$ is compact.
Lemma~\ref{Lem:Scalarization} therefore yields an
isomorphic embedding of $C(K)$ into a scalar-valued
$c_0$ space. By
\cite[Theorem~1.1, $(iv)\Leftrightarrow(v)$]{Marciszewski},
we conclude that $C(K)\sim c_0(\Lambda)$ for some set $\Lambda$.

Assume now that $\Gamma$ is infinite. For every $0<\eta<1$,
Lemma~\ref{Lem:Scalarization} and reindexing yield an
isomorphic embedding $R_\eta:C(K)\to c_0(\Gamma)$ with
\[
\|R_\eta\|\|R_\eta^{-1}\|
\leq \frac{\|T\|\|T^{-1}\|}{1-\eta}.
\]
Letting $\eta\to0$ and taking the infimum over all
isomorphic embeddings $T:C(K)\to c_0(\Gamma,X)$, we obtain
\[
d_{\mathrm{emb}}\bigl(C(K),c_0(\Gamma)\bigr)
\leq
d_{\mathrm{emb}}\bigl(C(K),c_0(\Gamma,X)\bigr).
\]

Conversely, fix $x_0\in S_X$. The map $J:c_0(\Gamma)\to c_0(\Gamma,X)$, given by $J((t_\gamma)_\gamma)=(t_\gamma x_0)_\gamma$, is an isometric embedding. Composing any isomorphic embedding of $C(K)$ into $c_0(\Gamma)$ with $J$ preserves
its distortion, yielding the reverse inequality.
\end{proof}

\begin{corollary}
Let $K$ be an infinite compact Hausdorff space, let $\Gamma$ be a set,
and let $X$ be a nonzero Banach space containing no isomorphic copy
of $c_0$. Then
\[C(K,X)\sim c_0(\Gamma,X)\Leftrightarrow C(K)\sim c_0(\Gamma).\]
\end{corollary}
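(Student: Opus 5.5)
We prove the two implications separately.

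For ($\Leftarrow$), suppose $C(K)\sim c_0(\Gamma)$ via an isomorphism $S:C(K)\to c_0(\Gamma)$. We use the standard identification $C(K,X)\cong C(K)\check\otimes_\varepsilon X$ (injective tensor product), and likewise $c_0(\Gamma,X)\cong c_0(\Gamma)\check\otimes_\varepsilon X$. The map $S\otimes\Id_X$ extends to an isomorphism between these injective tensor products, with norm at most $\|S\|$ and inverse of norm at most $\|S^{-1}\|$. This is because the injective norm is uniform with respect to operators. A more concrete route, if one prefers to avoid tensor products, is to define $(S\otimes\Id_X)f$ for $f\in C(K,X)$ by weak composition: for $x^*\in X^*$ set $x^*\bigl(((S\otimes\Id)f)_\gamma\bigr)=(S(x^*\circ f))_\gamma$. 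One then checks, on the dense subspace of finite sums $\sum g_i\otimes x_i$, that the image lies in $c_0(\Gamma,X)$ and that the norm estimates hold. This direction does not use any hypothesis on $X$ or on $K$ beyond $K$ being infinite.

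For ($\Rightarrow$), suppose $C(K,X)\sim c_0(\Gamma,X)$. Since $C(K)$ embeds isometrically into $C(K,X)$ via $g\mapsto g\,x_0$ with $x_0\in S_X$, we get $C(K)\hookrightarrow c_0(\Gamma,X)$. Theorem~\ref{Thm:VectorMarciszewski} then gives $C(K)\sim c_0(\Lambda)$ for some set $\Lambda$. Since $K$ is infinite, $\Lambda$ is infinite, and for a scattered $K$ (which is the case here, by the proof of Theorem~\ref{Thm:VectorMarciszewski}) we have $|\Lambda|=\operatorname{dens}C(K)=w(K)$. The remaining task is to show $|\Lambda|=|\Gamma|$, i.e.\ that the cardinals match; once this holds, $C(K)\sim c_0(\Lambda)=c_0(\Gamma)$ up to reindexing.

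The main obstacle is this cardinal comparison, and the natural tool is the density character. Let $d=\operatorname{dens}X$. Then
\[
\operatorname{dens}C(K,X)=\max(w(K),d), \qquad \operatorname{dens}c_0(\Gamma,X)=\max(|\Gamma|,d)
\]
(for infinite $\Gamma$; finite $\Gamma$ is excluded, since then $c_0(\Gamma,X)\cong X^{|\Gamma|}$ contains no $c_0$, whereas $C(K)$ with $K$ infinite does). Comparing densities alone only yields $\max(|\Lambda|,d)=\max(|\Gamma|,d)$, which is not enough when $d$ is large. To separate the cardinals we need an invariant that ignores $X$. The plan is to count the coordinates through the embedding. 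For the upper bound $|\Lambda|\le|\Gamma|$, apply Lemma~\ref{Lem:Scalarization} to the embedding $C(K)\hookrightarrow c_0(\Gamma,X)$, whose coordinate operators are compact by Lemma~\ref{Lem:CompactCoordinates}. This gives $c_0(\Lambda)\sim C(K)\hookrightarrow c_0(\Lambda')$ with $|\Lambda'|=|\Gamma|$. Since $c_0(\Lambda)$ has an uncountable-size family of disjoint unit vectors (or, simply, $\operatorname{dens}c_0(\Lambda)\le\operatorname{dens}c_0(\Lambda')$), we obtain $|\Lambda|\le|\Gamma|$.

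For the reverse bound $|\Gamma|\le|\Lambda|$, argue symmetrically with the inverse isomorphism $c_0(\Gamma)\hookrightarrow c_0(\Gamma,X)\to C(K,X)\cong c_0(\Lambda,X)$. Here the last identification comes from the ($\Leftarrow$) direction applied to $C(K)\sim c_0(\Lambda)$. We then need that the coordinate operators $c_0(\Gamma)\to X$ are compact, which holds because $c_0(\Gamma)=C(A(|\Gamma|))$ with $A(|\Gamma|)$ scattered, so Lemma~\ref{Lem:CompactCoordinates} applies. Lemma~\ref{Lem:Scalarization} then yields $c_0(\Gamma)\hookrightarrow c_0(\Lambda'')$ with $|\Lambda''|=|\Lambda|$, and density gives $|\Gamma|\le|\Lambda|$. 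This symmetric scalarization is the key idea for eliminating the influence of $X$ on the cardinal count.
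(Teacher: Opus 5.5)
Your proof is correct, but the ($\Rightarrow$) direction takes a different route from the paper's.

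\textbf{The paper's route.} The paper obtains the cardinal identification from the external result \cite[Theorem~1.3]{Candido2016}: an embedding of $C(K)$, with $K$ infinite, into $C_0(L,X)$, with $X$ containing no $c_0$, forces $|K|\le|L|$. Applied to $C(K)\hookrightarrow c_0(\Gamma,X)$ and $c_0(\Gamma)\hookrightarrow C(K,X)$, it gives that $\Gamma$ is infinite and $|K|=|\Gamma|$. Applied to the two scalar embeddings coming from $C(K)\sim c_0(\Lambda)$, it gives $|\Lambda|=|K|$.

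\textbf{Your route.} You never compare anything with $|K|$. Instead you use Lemma~\ref{Lem:Scalarization} twice to strip off $X$:
\begin{enumerate}
\item on $C(K)\hookrightarrow c_0(\Gamma,X)$;
\item on $c_0(\Gamma)\hookrightarrow C(K,X)\cong c_0(\Lambda,X)$, where the last isomorphism comes from the already proved ($\Leftarrow$) direction.
\end{enumerate}
You then compare density characters. These are monotone under passing to subspaces and equal the index cardinality for $c_0$ of an infinite set.

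\textbf{What each buys.} Your argument is self-contained modulo Theorem~\ref{Thm:VectorMarciszewski} and the two lemmas, and it shows the scalarization lemma doing double duty. The paper's argument is shorter and yields the extra information $|K|=|\Gamma|$, at the cost of one more outside theorem.

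\textbf{Small points to tidy up.}
\begin{enumerate}
\item $c_0(\Gamma)$ is not equal to $C(A(|\Gamma|))$; it is isomorphic to it, and is a complemented hyperplane of it. That is all you need for compactness of operators $c_0(\Gamma)\to X$ via Lemma~\ref{Lem:CompactCoordinates}.
\item Your exclusion of finite $\Gamma$ relies on the true but unproved fact that $c_0$-freeness is stable under finite direct sums. A more direct alternative: for finite $\Gamma$, the construction in the proof of Lemma~\ref{Lem:Scalarization} produces a finite index set. That would embed the infinite-dimensional space $C(K)$ into a finite-dimensional one.
\item The remark about ``disjoint unit vectors'' is irrelevant; the density comparison alone does the job.
\end{enumerate}
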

\begin{proof}
Suppose that $C(K,X)\sim c_0(\Gamma,X)$. Then
\[
C(K)\hookrightarrow c_0(\Gamma,X)
\quad\text{and}\quad
c_0(\Gamma)\hookrightarrow C(K,X).
\]
Since $K$ is infinite, the first embedding and
\cite[Theorem~1.3]{Candido2016} imply that $\Gamma$
is infinite and $|K|\leq|\Gamma|$. Applying the same
theorem to the second embedding gives
$|\Gamma|\leq|K|$. Thus $|K|=|\Gamma|$.

By Theorem~\ref{Thm:VectorMarciszewski},
$C(K)\sim c_0(\Lambda)$ for some infinite set $\Lambda$.
Applying \cite[Theorem~1.3]{Candido2016} in both directions
with scalar-valued functions yields
$|\Lambda|=|K|=|\Gamma|$. Hence $C(K)\sim c_0(\Gamma)$.

Conversely, tensoring an isomorphism
$C(K)\sim c_0(\Gamma)$ with the identity $\Id_X$
(see \cite[Proposition~3.2]{Ryan}) and using the
canonical identifications
\[
C(K)\widehat\otimes_\varepsilon X\cong C(K,X),
\qquad
c_0(\Gamma)\widehat\otimes_\varepsilon X
\cong c_0(\Gamma,X),
\]
we obtain $C(K,X)\sim c_0(\Gamma,X)$.
\end{proof}

\begin{corollary}
Let $K$ be a nonempty compact Hausdorff space,, let $\Gamma$ be a set, and let
$(X_\gamma)_{\gamma\in\Gamma}$ be a family of Banach spaces, each
containing no isomorphic copy of $c_0$.
If $C(K)$ is isomorphic to a subspace of
$\big(\bigoplus_{\gamma\in\Gamma}X_\gamma\big)_{c_0}$, then
$C(K)\sim c_0(\Lambda)$ for some set $\Lambda$.
\end{corollary}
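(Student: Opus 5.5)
The plan is to run the same argument as in the proof of Theorem~\ref{Thm:VectorMarciszewski}, with the single space $X$ replaced by the fibres $X_\gamma$. Lemma~\ref{Lem:Scalarization} never uses that the target coordinates lie in one fixed space. Let $T:C(K)\to\big(\bigoplus_{\gamma\in\Gamma}X_\gamma\big)_{c_0}$ be an isomorphic embedding, and write $T_\gamma:C(K)\to X_\gamma$ for its coordinate operators. If $\Gamma=\varnothing$ the target is $\{0\}$, which cannot happen for nonempty $K$. We may also discard every $\gamma$ with $X_\gamma=\{0\}$ without changing the direct sum.

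\textbf{Step 1: $K$ is scattered.} Fix a set $\Gamma'$ and a Banach space $Y$ with no copy of $c_0$ into which $\big(\bigoplus X_\gamma\big)_{c_0}$ embeds; then \cite[Theorem~1.5]{Candido2016} applies. The natural choice is
\[
Y=\Big(\bigoplus_{\gamma\in\Gamma}X_\gamma\Big)_{\ell_2}.
\]
This space contains no copy of $c_0$: an $\ell_2$-sum of spaces without $c_0$ has no $c_0$. This is a standard fact, proved via a gliding hump argument or the Bessaga--Pe{\l}czy\'nski characterization by weakly unconditionally Cauchy series. Each $X_\gamma$ embeds isometrically into $Y$ as the $\gamma$-th summand, so $\big(\bigoplus X_\gamma\big)_{c_0}$ embeds isometrically into $c_0(\Gamma,Y)$. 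Composing gives $C(K)\hookrightarrow c_0(\Gamma,Y)$, and \cite[Theorem~1.5]{Candido2016} (or Theorem~\ref{Thm:VectorMarciszewski} itself, applied with $X=Y$) shows that $K$ is scattered. In fact, applying Theorem~\ref{Thm:VectorMarciszewski} with $X=Y$ immediately yields $C(K)\sim c_0(\Lambda)$, which would finish the proof.

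\textbf{Alternative route.} If one prefers to avoid the $\ell_2$-sum fact, one can argue directly. Once $K$ is scattered, Lemma~\ref{Lem:CompactCoordinates} applied to each $T_\gamma:C(K)\to X_\gamma$ shows that every coordinate operator is compact. The proof of Lemma~\ref{Lem:Scalarization} then goes through verbatim with $X$ replaced by $X_\gamma$ at coordinate $\gamma$: it only uses a finite $\delta$-net of $T_\gamma(B_E)$ and norming functionals in $X_\gamma^*$. This produces $R:C(K)\hookrightarrow c_0(\Lambda)$, and Marciszewski's theorem \cite[Theorem~1.1]{Marciszewski} gives $C(K)\sim c_0(\Lambda)$.

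\textbf{Main obstacle.} The only nonroutine step is establishing scatteredness of $K$ when the fibres vary, that is, ensuring \cite[Theorem~1.5]{Candido2016} applies. I would handle this through the $\ell_2$-sum embedding above, so the real point to check is that $\big(\bigoplus X_\gamma\big)_{\ell_2}$ contains no $c_0$. For this, suppose a normalized sequence equivalent to the $c_0$ basis existed. Passing to a subsequence and applying a gliding hump argument, either its coordinate projections onto finitely many summands already give a $c_0$-like sequence, contradicting the hypothesis on some $X_\gamma$, or one obtains almost disjointly supported blocks. Such blocks are equivalent to the $\ell_2$ basis, which is incompatible with being equivalent to the $c_0$ basis.
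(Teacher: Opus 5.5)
Your proposal is correct and is essentially the paper's own argument: embed $\big(\bigoplus_{\gamma\in\Gamma}X_\gamma\big)_{c_0}$ isometrically into $c_0(\Gamma,Y)$ with $Y=\big(\bigoplus_{\gamma\in\Gamma}X_\gamma\big)_{\ell_2}$, which is nonzero because $C(K)\neq\{0\}$ embeds, and then apply Theorem~\ref{Thm:VectorMarciszewski} with $X=Y$. The paper gets ``$Y$ contains no copy of $c_0$'' by reducing to countable subsums and citing Burlando, whereas you sketch the standard gliding-hump/Bessaga--Pe{\l}czy\'nski argument; your ``alternative route'' does not actually avoid the $\ell_2$-sum fact, since its scatteredness step still goes through $Y$, but that route is not needed.
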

\begin{proof}
Set $X=\big(\bigoplus_{\gamma\in\Gamma}X_\gamma\big)_{\ell_2}$.
Every separable subspace of $X$ is contained in a countable
subsum. Hence \cite{Burlando} implies that $X$ contains
no isomorphic copy of $c_0$. 

Writing $i_\gamma:X_\gamma\to X$ for the canonical
inclusions, the map
\[
(x_\gamma)_{\gamma\in\Gamma}
\longmapsto
(i_\gamma x_\gamma)_{\gamma\in\Gamma}
\]
defines an isometric embedding of
$\big(\bigoplus_{\gamma\in\Gamma}X_\gamma\big)_{c_0}$
into $c_0(\Gamma,X)$. Since $K$ is nonempty, the assumed embedding ensures
that $X\neq\{0\}$. The conclusion now follows from
Theorem~\ref{Thm:VectorMarciszewski}.
\end{proof}

\section*{Acknowledgements}

This research was supported by the Funda\c{c}\~ao de Amparo \`a Pesquisa do Estado de S\~ao Paulo (FAPESP), Grant No.~2023/12916-1.

\end{document}